\newcommand{\C}{{\mathbb C}}
\renewcommand{\P}{{\mathbb P}}
\newcommand{\Q}{{\mathbb Q}}
\newcommand{\R}{{\mathbb R}}
\newcommand{\Z}{{\mathbb Z}}
\renewcommand{\Im}{\mathrm{Im}}
\newcommand{\calC}{\mathcal{C}}
\newcommand\eps\varepsilon
\newcommand\ph\varphi
\newcommand{\height}{\mathrm{h}}
\newcommand{\tilheight}{\tilde{\height}}
\newcommand{\bfa}{\mathbf{a}}
\newtheorem{theorem}{Theorem}[section]
\newtheorem{proposition}[theorem]{Proposition}
\newtheorem{corollary}[theorem]{Corollary}
\newtheorem{lemma}[theorem]{Lemma}
\numberwithin{equation}{section}
\renewcommand*\l@section[2]{%
  \ifnum \c@tocdepth >\z@
    \addpenalty\@secpenalty
    \addvspace{0.2em \@plus\p@}%
    \setlength\@tempdima{1.5em}%
    \begingroup
      \parindent \z@ \rightskip \@pnumwidth
      \parfillskip -\@pnumwidth
      \leavevmode \bfseries
      \advance\leftskip\@tempdima
      \hskip -\leftskip
      #1\nobreak\hfil \nobreak\hb@xt@\@pnumwidth{\hss #2}\par
    \endgroup
  \fi}
\title{Trinomials with given roots}
\author{
Yuri Bilu\footnote{Institut de Mathématiques de Bordeaux, Université de Bordeaux \& CNRS, Talence, France}, 
\stepcounter{footnote}\stepcounter{footnote}
Florian Luca\footnote{School of Mathematics, University of the Witwatersrand, Johannesburg, South Africa;
Research Group in Algebraic Structures and Applications, King Abdulaziz Uniersity, Jeddah, Saudi Arabia;
Department of Mathematics, University of Ostrava, Czech Republic;
supported by IRN ``GANDA'', by CNRS, and by grant no. 17-02804S of the Czech Granting
Agency}
}
\begin{document}

\hfuzz 4pt

\maketitle

\begin{abstract}
We show that, apart from some obvious exceptions, the number of trinomials vanishing at given complex numbers is bounded by an absolute  constant. When the numbers are algebraic, we also bound effectively the degrees and the heights of these trinomials. 

Keywords: trinomials; Subspace Theorem; logarithmic forms
\end{abstract}

{\footnotesize

\tableofcontents

}

\section{Introduction}

Everywhere below a \textit{trinomial} is an abbreviation for \textit{a monic trinomial non-vanishing at~$0$}; in other words,  a polynomial of the form ${X^m+AX^n+B}$ with ${B\ne 0}$. 

Let ${\Omega\subset\C^\times}$ be a finite set of non-zero complex numbers. In this note we study the trinomials vanishing at all elements of~$\Omega$.

Surely, infinitely many trinomials vanish at~$\Omega$ if ${\#\Omega\le 2}$. More generally, call ${\alpha,\beta\in \Omega}$  \textit{equivalent} if ${\alpha/\beta}$ is a root of unity. Then the following statement is obviously  true: if~$\Omega$ splits into~$2$ or less equivalence classes, then infinitely many trinomials vanish at~$\Omega$. Indeed, in this case~$\Omega$ is a subset of the roots of a polynomial of the form ${g(X^k)}$, where~$g$ is of degree~$2$ and~$k$ is some positive integer, and this $g(X^k)$ divides infinitely many trinomials. More generally, if~$\Omega$ splits into ${\ell-1}$ or less equivalence classes, then  infinitely many $\ell$-nomials vanish at~$\Omega$. 

Using known results about linear equations in multiplicative groups, it is not hard to show the following.

\begin{theorem}
\label{thc}
Assume that~$\Omega$ splits into~$3$ or more equivalence classes (in the sense defined above). Then the number of  trinomials vanishing at~$\Omega$ is bounded by an absolute effective constant. 
\end{theorem}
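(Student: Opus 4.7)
The plan is to fix three elements $\alpha,\beta,\gamma\in\Omega$ from three different equivalence classes and exploit that $X^m+AX^n+B$ vanishes at all three. Linearity in $(A,B)$ forces the determinantal identity
\[
\det\begin{pmatrix} \alpha^m & \alpha^n & 1 \\ \beta^m & \beta^n & 1 \\ \gamma^m & \gamma^n & 1 \end{pmatrix}=0,
\]
which, expanded, is the six-term relation
\[
\alpha^n\gamma^m-\beta^n\gamma^m-\alpha^m\gamma^n+\beta^m\gamma^n+\alpha^m\beta^n-\alpha^n\beta^m=0.
\]
Writing the six signed monomials as $T_1,\dots,T_6$, this becomes a $\pm1$ linear relation among six elements of the finitely generated multiplicative group $G=\langle\alpha,\beta,\gamma\rangle\subset\C^\times$, whose torsion-free rank is at most~$3$.

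I would next divide the relation by $T_6=-\alpha^n\beta^m$ to obtain a five-term $S$-unit equation in $G$ and invoke the Evertse--Schlickewei--Schmidt theorem: the number of projective tuples $[T_1:\dots:T_6]$ for which no proper subsum vanishes is bounded by a constant depending only on the number of summands and on the rank of $G$, hence by an absolute effective constant in our setting. From any such projective tuple the pair $(m,n)$ is determined by two monomial ratios: $T_1/T_6=-(\gamma/\beta)^m$ determines $m$ because $\gamma/\beta$ is not a root of unity, and $T_5/T_6=-(\alpha/\beta)^{m-n}$ determines $m-n$ because $\alpha/\beta$ is not a root of unity. With $(m,n)$ in hand, $A$ and $B$ are recovered from the vanishing conditions at $\alpha$ and $\beta$, giving a single trinomial per projective tuple.

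The main obstacle is the degenerate case in which some proper subsum of the $T_i$ vanishes. I would enumerate the subsum patterns. Among the $15$ two-term subsums, nine are of the form $\lambda^a(\mu^k-\nu^k)=0$ for some $\{\lambda,\mu,\nu\}=\{\alpha,\beta,\gamma\}$ and $k\in\{n,m,m-n\}$ and are excluded outright by the 3-class hypothesis; the remaining six encode a non-trivial multiplicative relation $\alpha^a\beta^b\gamma^c=-1$ with $a+b+c=0$, and in each such case the complementary four-term subsum also vanishes, so the Evertse--Schlickewei--Schmidt argument is simply re-run on the shorter equation. Longer proper subsums are handled the same way, their contribution being absolutely bounded by the same theorem. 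The remaining task is to verify case by case that recovery of $(m,n)$ from the surviving monomial ratios still succeeds after the subsum reductions; this is mechanical bookkeeping rather than a substantive difficulty.
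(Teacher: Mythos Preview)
Your approach is essentially the paper's: pick $\alpha,\beta,\gamma$ from distinct classes, expand the determinant into a six-term relation in the rank-$\le 3$ group $\langle\alpha,\beta,\gamma\rangle$, and invoke Evertse--Schlickewei--Schmidt, with a case analysis for vanishing subsums.

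What you call ``mechanical bookkeeping'' is exactly where the paper invests its effort, packaging it as a separate lemma (its Lemma~\ref{lprop}). Two points there deserve more than a hand-wave. First, ruling out a $(2,2,2)$ splitting of the six terms: you observe that nine of the fifteen pairs are forbidden, but one must check that no three of the remaining six ``non-excluded'' pairs partition $\{T_1,\dots,T_6\}$; this is the content of item~(\ref{inotwotwotwo}) of the paper's lemma. Second, and more substantively, your recovery of $(m,n)$ from monomial ratios relies on finding, inside the surviving subset, two ratios that are pure powers of a single quotient $\alpha/\beta$, $\beta/\gamma$, or $\gamma/\alpha$. For the ``cyclic'' $(3,3)$ split
\[
\{\alpha^m\beta^n,\ \beta^m\gamma^n,\ \gamma^m\alpha^n\}\ \cup\ \{\alpha^n\beta^m,\ \beta^n\gamma^m,\ \gamma^n\alpha^m\}
\]
no such ratio exists on either side, so your stated recovery mechanism does not apply directly. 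One can still recover $(m,n)$ here: writing $p=m-m'$, $q=n-n'$, proportionality on one side forces $(\alpha/\gamma)^{p^2-pq+q^2}=1$, and positive definiteness of $p^2-pq+q^2$ gives $p=q=0$. The paper instead uses proportionality on \emph{both} sides simultaneously (item~(\ref{ipartition}) of its lemma), leading to the bound $\kappa(3)^2$ for this type rather than $\kappa(3)$. Either route closes the argument, but neither is quite ``mechanical.''
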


Note that in the statement of this theorem, as well as of the subsequent Theorem~\ref{thalg} we do not formally exclude binomials ${X^m+B}$, which can be viewed as trinomials ${X^m+AX^n+B}$ with  ${A=0}$. However,  a binomial can vanish only at a set~$\Omega$ consisting of a single equivalence class, so the (finitely many) trinomials featured in  these theorems are genuine trinomials, with ${AB\ne 0}$. 

Theorem~\ref{thc} is not really new: it can be obtained by combining the proof of Theorem~1 in~\cite{EGST88} with the results from \cite{AV09,ESS02}, which were not available at the time when~\cite{EGST88} was written. However, since we did not find in the literature exactly this statement, we include a short proof in Section~\ref{sc}. 

Our principal result concerns the case when~$\Omega$ consists of algebraic numbers. We denote by ${\height(\cdot)}$ the absolute logarithmic height of an algebraic number or of a polynomial, see Section~\ref{sheights}. Given a finite set ${\Omega \subset \bar\Q}$, we denote
$$
\height(\Omega)=\max\{\height(\alpha): \alpha\in \Omega\}. 
$$ 

\begin{theorem}
\label{thalg}
In the set-up of Theorem~\ref{thc} assume that the elements of~$\Omega$ generate a number field of degree~$d$. Then every trinomial vanishing at~$\Omega$ is of degree not exceeding $10^{60}e^{10d^2(\height(\Omega)+1)}$ and of height not exceeding $10^{70}e^{10d^2(\height(\Omega)+1)}$. In particular, the problem of determining all trinomials vanishing at~$\Omega$ is decidable. 
\end{theorem}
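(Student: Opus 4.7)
Pick three pairwise inequivalent elements $\alpha,\beta,\gamma\in\Omega$; this is possible by hypothesis. Writing down the three requirements that $P(X)=X^m+AX^n+B$ vanish at each of them gives a homogeneous linear system in the vector $(A,B,1)^{\top}$, whose $3\times3$ coefficient matrix must be singular. Expanding its determinant yields the identity
$$\alpha^m(\beta^n-\gamma^n)+\beta^m(\gamma^n-\alpha^n)+\gamma^m(\alpha^n-\beta^n)=0, \qquad(*)$$
and because none of $\alpha/\beta$, $\beta/\gamma$, $\gamma/\alpha$ is a root of unity, each of the three summands of $(*)$ is nonzero for every positive integer $n$.

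Dividing $(*)$ by its third summand rewrites it as a two-term relation $U+V=1$, where
$$U=\left(\frac{\alpha}{\gamma}\right)^m\frac{\gamma^n-\beta^n}{\alpha^n-\beta^n},\qquad V=\left(\frac{\beta}{\gamma}\right)^m\frac{\alpha^n-\gamma^n}{\alpha^n-\beta^n}.$$
To extract an effective bound on $m$, I would appeal to effective lower bounds for linear forms in logarithms of algebraic numbers (Baker, Matveev). After permuting $\alpha,\beta,\gamma$ if necessary, fix a place $v$ of the splitting field $K$ at which $|\alpha/\gamma|_v$ is the largest of $1$, $|\alpha/\gamma|_v$, $|\beta/\gamma|_v$. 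Then $|U|_v$ is exponentially large in $m$, so $U+V=1$ forces $V/U$ to be $v$-adically close to $-1$, making the linear form
$$\Lambda=m\log(\beta/\alpha)+\log\bigl((\alpha^n-\gamma^n)/(\gamma^n-\beta^n)\bigr)-i\pi$$
$v$-adically very small; specifically $|\Lambda|_v\leq e^{-cm}$ for an explicit $c>0$ depending on $\alpha,\beta,\gamma,v$. Matveev's lower bound applied to $\Lambda$ then converts this, together with the trivial relation $n<m$, into an effective upper bound on $m$ of the shape asserted in the theorem.

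Once $m$ (and hence $n$) is bounded, the first two equations recover the coefficients as
$$A=-\frac{\alpha^m-\beta^m}{\alpha^n-\beta^n},\qquad B=\frac{(\alpha\beta)^n(\alpha^{m-n}-\beta^{m-n})}{\alpha^n-\beta^n},$$
and standard height inequalities for sums, products and quotients of algebraic numbers translate the bound on $m$ into the claimed bound on $\height(P)$.

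The principal obstacle is the linear-forms-in-logarithms step: one of the logarithm arguments in $\Lambda$ depends on $n$, so its height grows linearly in $n$, and a naive comparison of upper and lower bounds would put $n$ on both sides. Overcoming this requires choosing the place $v$ carefully so that the $v$-adic size of $(\alpha^n-\gamma^n)/(\gamma^n-\beta^n)$ can be controlled, and bookkeeping the exponent $k=m-n$ separately from $n$; the full strength of the three-equivalence-class hypothesis is what ensures that this balancing yields a genuine upper bound on $m$ rather than a tautology.
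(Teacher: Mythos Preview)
Your framework matches the paper's: reduce to three pairwise inequivalent $\alpha,\beta,\gamma$, expand the $3\times3$ determinant, and feed a resulting near-vanishing expression into Matveev. You also correctly diagnose the obstacle: the second logarithm in your $\Lambda$ has height of order $n$, so Matveev only gives $|\Lambda|\ge e^{-Cn\log m}$, and comparing with $|\Lambda|\le e^{-cm}$ yields $m\ll n\log m$, a tautology since $n<m$. What is missing is an actual mechanism to break this circle; ``bookkeeping $k=m-n$ separately'' is the right slogan, but the proposal does not supply the argument.

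The paper resolves this in two steps that you are missing. First, a short geometric lemma (Lemma~\ref{labsval}): three roots of a trinomial cannot all have the same absolute value unless two of the pairwise quotients are roots of unity, because the three points $\alpha^m,\beta^m,\gamma^m$ lie on two circles intersecting in at most two points. This guarantees, after relabelling, a \emph{strict} archimedean inequality $|\alpha|>|\beta|\ge|\gamma|$; your choice of ``a place where $|\alpha/\gamma|_v$ is largest'' does not by itself secure strictness. Second, a preliminary bound on $m-n$ that does not come from the determinant at all: comparing the two expressions $A=-(\alpha^m-\beta^m)/(\alpha^n-\beta^n)=-(\beta^m-\gamma^m)/(\beta^n-\gamma^n)$ gives, using $|\alpha|>|\beta|$, the inequality $m-n\le C_1\log(n+1)$. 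Only then does the paper rewrite the determinant as $(a-b)(\alpha\beta)^n+(b-c)(\beta\gamma)^n-(a-c)(\alpha\gamma)^n=0$ with $a=\alpha^{m-n}$, $b=\beta^{m-n}$, $c=\gamma^{m-n}$, so that the ``variable'' coefficient $\vartheta=(a-b)/(a-c)$ now has height $O(m-n)=O(\log n)$ rather than $O(n)$. Matveev applied to $\Lambda=\log\vartheta+n\log(\beta/\gamma)-2\pi i\ell$ then gives $n\le C_2(m-n)\log(n+1)$, and substituting the first bound yields $n\ll(\log n)^2$, hence an absolute bound. Your single application of Matveev, with a height-$n$ term inside, cannot close the loop without this preliminary $m-n\ll\log n$ step and the geometric lemma that makes it possible.
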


This theorem is proved in Section~\ref{salg}. 

Let~$K$ be a field of characteristic~$0$ and ${\alpha \in\bar K}$ an element algebraic over~$K$. For certain applications one needs information about trinomials over~$K$ vanishing at~$\alpha$. Clearly, if for some positive integer~$k$ we have ${[K(\alpha^k):K]\le 2}$ then  infinitely many trinomials vanish at~$\alpha$.

\begin{corollary}
Let~$K$ be a field of characteristic~$0$ and let ${\alpha\in \bar K}$ be such that 
\begin{equation}
\label{ehyp}
[K(\alpha^k):K]\ge 3 \qquad (k=1,2,\ldots).
\end{equation}
Then the number of trinomials in $K[X]$ vanishing at~$\alpha$ is bounded by an absolute effective constant.
Moreover, if~$K$ is a number field then each such trinomial is of degree not exceeding $10^{60}e^{10d^2\nu^6(\height(\alpha)+1)}$ and of height not exceeding $10^{70}e^{10d^2\nu^6(\height(\alpha)+1)}$, where ${d=[K:\Q]}$ and ${\nu=[K(\alpha):K]}$. In particular, the problem of determining all trinomials over~$K$ vanishing at~$\alpha$ is decidable. 
\end{corollary}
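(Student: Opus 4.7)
The plan is to reduce the statement to Theorems~\ref{thc} and~\ref{thalg} by passing from $\alpha$ to the set $\Omega=\{\alpha_1,\dots,\alpha_\nu\}$ of its $K$-conjugates, which any trinomial $f\in K[X]$ vanishing at $\alpha$ must also annihilate (since $f$, having coefficients in $K$, is fixed by $\mathrm{Gal}(\bar K/K)$).

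First I check that~(\ref{ehyp}) forces $\Omega$ to split into at least three equivalence classes. If it split into $\ell\le 2$ classes, then the finitely many ratios $\alpha_i/\alpha_j$ that are roots of unity have orders with some common multiple~$k$; then $\alpha_i^k=\alpha_j^k$ whenever $\alpha_i,\alpha_j$ lie in the same class, so the Galois orbit $\{\alpha_1^k,\dots,\alpha_\nu^k\}$ of $\alpha^k$ over $K$ has at most $\ell\le 2$ elements, forcing $[K(\alpha^k):K]\le 2$ and contradicting~(\ref{ehyp}). Pick then $\alpha_1,\alpha_2,\alpha_3$ in three distinct classes.

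For the first assertion, given any finite family of candidate trinomials over $K$ vanishing at $\alpha$, I embed a finitely generated subfield of $K$ containing all their coefficients together with $\alpha_1,\alpha_2,\alpha_3$ into $\C$ by some $\sigma$. The relation ``$\zeta^n=1$'' being preserved and reflected by a field embedding, $\Omega'=\{\sigma(\alpha_1),\sigma(\alpha_2),\sigma(\alpha_3)\}\subset\C^\times$ still splits into three equivalence classes, and the trinomials map injectively to trinomials over $\C$ vanishing at $\Omega'$; Theorem~\ref{thc} then bounds the family by an absolute constant.

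For the quantitative part $K$ is a number field, so $\Omega'=\{\alpha_1,\alpha_2,\alpha_3\}\subset\bar\Q$ and I apply Theorem~\ref{thalg} directly. Heights are Galois-invariant, so $\height(\Omega')=\height(\alpha)$. The main subtlety is to avoid the naive bound $(d\nu)^3$ obtained by treating the $\alpha_i$ as individual $\Q$-conjugates of $\alpha$ of degree up to $d\nu$; instead, one takes all three together over $K$ before descending to $\Q$:
\[
[\Q(\alpha_1,\alpha_2,\alpha_3):\Q]\le[K(\alpha_1,\alpha_2,\alpha_3):\Q]=[K(\alpha_1,\alpha_2,\alpha_3):K]\cdot d\le\nu^3 d,
\]
the middle factor being estimated via the tower $K\subset K(\alpha_1)\subset K(\alpha_1,\alpha_2)\subset K(\alpha_1,\alpha_2,\alpha_3)$ with each step of degree at most $\nu$. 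Substituting $D=d\nu^3$ and $\height(\Omega')=\height(\alpha)$ into the bounds of Theorem~\ref{thalg} yields the claimed $10^{60}e^{10d^2\nu^6(\height(\alpha)+1)}$ and $10^{70}e^{10d^2\nu^6(\height(\alpha)+1)}$.
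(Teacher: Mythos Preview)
Your proof is correct and follows the same approach as the paper's one-sentence argument: pick three $K$-conjugates of $\alpha$ in distinct equivalence classes (using~\eqref{ehyp}) and apply Theorems~\ref{thc} and~\ref{thalg} to this set. You simply make explicit what the paper leaves to the reader---namely, why~\eqref{ehyp} guarantees three inequivalent conjugates, how to embed into~$\C$ when $K$ is an abstract characteristic-$0$ field, and why the degree parameter in Theorem~\ref{thalg} may be taken as $d\nu^3$ rather than the cruder $(d\nu)^3$.
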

To deduce the corollary from Theorems~\ref{thc} and~\ref{thalg}, just apply them for the set ${\Omega=\{\alpha,\beta, \gamma\}}$, where~$\beta$ and~$\gamma$ are two conjugates of~$\alpha$ over~$K$ such that none of the quotients ${\alpha/\beta,\alpha/\gamma,\beta/\gamma}$ is a root of unity. Existence of such~$\beta$ and~$\gamma$ follows from hypothesis~\eqref{ehyp}. 

\section{Generalities about heights}
\label{sheights}
In this section we very briefly recall definitions and basic facts about absolute logarithmic heights. The \textit{height of a point ${\bfa=(a_0:\cdots:a_n)\in \P^n(\bar\Q)}$ in the projective space} is defined by 
$$
\height(\bfa) = \frac{1}{[K:\Q]}\sum_{v\in M_K}[K_v:\Q_v]\log\max\{|a_0|_v, \ldots,|a_n|_v\},
$$
where~$K$ is a number field containing ${a_0,\ldots, a_n}$ and the absolute values on~$K$ are normalized to extend standard absolute values on~$\Q$. The right-hand side is independent of the choice of~$K$ and of the homogeneous coordinates ${a_0,\ldots, a_n}$. For ${\alpha \in \bar\Q}$ we abbreviate
${\height(1:\alpha)=\height(\alpha)}$ and call this the \textit{height of the algebraic number~$\alpha$}. 

The \textit{height of a polynomial} with algebraic coefficients is the height of a point in the projective space whose homogeneous coordinates are the coefficients  of the polynomial. 

We use, without special mention, the standard properties of the heights: for ${\alpha, \beta \in \bar\Q}$ we have
$$
\height(\alpha+\beta) \le \height(\alpha)+\height(\beta)+\log 2, \quad \height(\alpha\beta) \le \height(\alpha)+\height(\beta), \quad \height(\alpha^n)=|n|\height(\alpha), 
$$
etc. If~$\alpha$ and~$\beta$ are conjugate over~$\Q$ then ${\height(\alpha)=\height(\beta)}$. This implies, in particular, that ${\height(|\alpha|)\le \height(\alpha)}$ for a complex algebraic number~$\alpha$. 

We also systematically use the \textit{Louiville inequality}: if~$\alpha$ is a non-zero complex algebraic number of degree~$d$ then 
$$
e^{-d\height(\alpha)}\le |\alpha|\le e^{d\height(\alpha)}. 
$$
Moreover, in these estimates one may replace~$d$ by $d/2$  if ${\alpha \notin\R}$ but we will never use it. 

One special case will be frequently used.
\begin{proposition}
\label{prmodnoone}
Let~$\theta$ is a complex algebraic number of degree~$d$ such that ${|\theta|\ne 1}$. Then 
\begin{equation}
\label{emodnoone}
\bigl|1-|\theta|\bigr|\ge e^{-d^2(\height(\theta)+\log2)}. 
\end{equation}
\end{proposition}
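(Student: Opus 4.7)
The plan is to apply the Liouville inequality, stated in the excerpt, to the non-zero algebraic number $\eta = 1 - |\theta|$. Writing $D = \deg_\Q(|\theta|)$ and using standard height inequalities together with the bound $\height(|\alpha|)\le\height(\alpha)$ noted in the paper, we have $\height(\eta) \le \height(|\theta|) + \log 2 \le \height(\theta) + \log 2$. Liouville then yields $|\eta| \ge e^{-D(\height(\theta)+\log 2)}$, so it suffices to prove the degree bound $D \le d^2$.

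To establish $D \le d^2$, I would split into two cases. If $\theta\in\R$, then $|\theta|=\pm\theta$ has degree at most $d \le d^2$ and we are done. Otherwise $\theta\ne\bar\theta$, and I would work with the field $L = \Q(\theta,\bar\theta)$. Since $\bar\theta$ is a root of $p(X)/(X-\theta)\in\Q(\theta)[X]$, where $p$ is the minimal polynomial of $\theta$, we have $[L:\Q(\theta)]\le d-1$ and therefore $[L:\Q]\le d(d-1)$. The restriction to $L$ of complex conjugation is an automorphism of order two, non-trivial because $\theta\in L\setminus\R$, so its fixed subfield $L^+ = L\cap\R$ has index two in $L$, giving $[L^+:\Q]\le d(d-1)/2$. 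The real number $|\theta|^2=\theta\bar\theta$ lies in $L^+$, so $\deg_\Q(|\theta|^2)\le d(d-1)/2$; passing to a square root yields $D \le d(d-1) \le d^2$.

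The main difficulty is getting the tight degree bound $D\le d^2$ rather than the weaker $D \le 2d^2$ that would follow from the direct observation that $|\theta|^2$ is a root of $\prod_{i,j}(X-\theta_i\theta_j)\in\Q[X]$ of degree $d^2$. The weaker bound would only give $e^{-2d^2(\height(\theta)+\log 2)}$, losing a factor of two in the exponent; saving this factor requires the CM-type observation that $|\theta|^2$ lies in the index-two real subfield $L^+$ of the complex-conjugation-stable field $L$. Once this structural point is in place, direct Liouville delivers \eqref{emodnoone}.
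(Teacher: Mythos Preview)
Your proof is correct and follows exactly the paper's approach: bound $\deg_\Q(|\theta|)$ by $d^2$ via the observation that $|\theta|^2=\theta\bar\theta$ lies in the real subfield $\Q(\theta,\bar\theta)\cap\R$ of degree at most $d(d-1)/2$, then apply Liouville to $1-|\theta|$ together with $\height(1-|\theta|)\le\height(\theta)+\log 2$. Your write-up simply supplies a bit more detail on why $[\Q(\theta,\bar\theta):\Q]\le d(d-1)$ and why complex conjugation has order two on that field.
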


\begin{proof}
Note that the degree of ${|\theta|}$ does not exceed~$d^2$. Indeed, this is obvious when ${\theta \in \R}$. And if ${\theta\notin \R}$ then ${|\theta|^2=\theta\bar\theta}$ belongs to the number field ${\Q(\theta, \bar\theta)\cap\R}$  of degree not exceeding ${d(d-1)/2}$. Hence the degree of $|\theta|$ in this case does not exceed ${d(d-1)}$.  

Since  the height of ${1-|\theta|}$ is at most ${\height(\theta)+\log 2}$, the result follows.  
\end{proof}

Philipp Habegger drew our attention to Theorem~2 of Mahler~\cite{Ma64}, which allows one to replace  ${d^2(\height(\theta)+\log2)}$ in~\eqref{emodnoone} by ${O(d^2\height(\theta)+d\log d)}$; for the details see \cite[Lemma~3.2]{Ha18} . This leads to a similar amendment  in the statement of Theorem~\ref{thalg}.

\section{Proof of Theorem~\ref{thc}}
\label{sc}

Our principal tool will be the fundamental result of Evertse, Schlickewei and Schmidt~\cite{ESS02} about linear equations in multiplicative groups. See also Theorem~6.2 of Amoroso and Viada~\cite{AV09} for a quantitative improvement. 

\begin{theorem}
\label{thuneq}
Let~$\Gamma$ be a subgroup of~$\C^\times$ of finite rank~$r$, and  ${a_1, \ldots, a_s\in \C^\times}$. Call a solution ${(x_1, \ldots, x_s)\in \Gamma^s}$ of 
\begin{equation}
\label{euneq}
a_1x_1+\cdots+a_sx_s=0
\end{equation} 
primitive if no proper sub-sum of ${a_1x_1+\cdots+a_sx_s}$ vanishes, and call two solutions ${(x_1, \ldots, x_s),(x_1', \ldots, x_s')\in \Gamma^s}$ proportional if there exists ${\lambda \in \Gamma}$ such that ${x_i'=\lambda x_i}$ for ${i=1,\ldots,s}$. Then the number of non-proportional primitive solutions of~\eqref{euneq} is bounded by an effectively computable quantity depending only on~$r$ and~$s$. 
\end{theorem}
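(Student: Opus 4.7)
The plan is to reduce to a finitely generated subgroup of a number field and then invoke Schmidt's Subspace Theorem, in the $S$-unit form due to Schlickewei. Since the asserted bound depends only on $r$ and $s$, we may replace~$\Gamma$ by a finitely generated subgroup of rank~$r$ containing the coordinates of whichever primitive solutions we wish to count, so that $\Gamma\cong \mu\times\Z^r$ for a finite cyclic torsion group~$\mu$. Embed $\Gamma$ in $K^\times$ for a number field~$K$ containing $a_1,\ldots,a_s$, and let~$S$ be a finite set of places of~$K$ including the archimedean ones and all places at which some generator of~$\Gamma$ has non-trivial valuation; then every element of~$\Gamma$ is an $S$-unit. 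Dividing through by~$x_s$ converts the count of non-proportional primitive solutions of~\eqref{euneq} into the count of solutions $\mathbf{y}=(y_1,\ldots,y_{s-1})\in\Gamma^{s-1}$ of
\[
a_1 y_1+\cdots+a_{s-1} y_{s-1}=-a_s
\]
such that no proper subsum on the left-hand side vanishes.

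\emph{Application of the Subspace Theorem.} For each such~$\mathbf{y}$, consider at each place $v\in S$ a system of $s-1$ linearly independent linear forms in $Y_1,\ldots,Y_{s-1}$; take the standard coordinate forms at all places except at one distinguished place per solution, where one coordinate form is replaced by $a_1 Y_1+\cdots+a_{s-1}Y_{s-1}$. This modified form evaluates at~$\mathbf{y}$ to the \emph{constant} $-a_s$, so at that place its value is much smaller than the generic coordinate $\max_v(1,|y_i|_v)$, provided the height $H(\mathbf{y})=\prod_{v\in S}\max_i(1,|y_i|_v)$ is large. A routine estimate shows that the normalized double product $\prod_{v\in S}\prod_i|L_{i,v}(\mathbf{y})|_v\cdot H(\mathbf{y})^{-(s-1)}$ is bounded above by a negative power of $H(\mathbf{y})$. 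By the Subspace Theorem, all but finitely many~$\mathbf{y}$ then lie in a finite union of proper $K$-linear subspaces of $K^{s-1}$, whose number is bounded effectively in terms of~$r$ and~$s$.

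\emph{Induction on~$s$.} On each proper subspace there is an extra $K$-linear relation $\sum c_i y_i=0$ among the coordinates, which when combined with the inhomogeneous equation either produces a vanishing proper subsum (forbidden by primitivity unless one of the $y_i$ is suppressed, i.e.\ unless the effective length is $<s$) or expresses one $y_i$ as a fixed $K$-linear combination of the others, reducing the problem to a unit equation with fewer terms. In either case the inductive hypothesis on~$s$, starting from the trivial case $s=2$, yields the bound. The principal obstacle, and the real technical content of~\cite{ESS02}, is this inductive step: the subspaces produced by the Subspace Theorem are defined over~$K$ and not over~$\Gamma$, so the set $\Gamma^{s-1}\cap V$ is not immediately a unit equation of shorter length. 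One must decompose such an intersection into boundedly many cosets of a subgroup of~$\Gamma$ of smaller rank on which a genuine shorter $\Gamma$-unit equation holds, and then control both the number of cosets and the rank drop so that the induction closes with a bound depending only on~$r$ and~$s$.
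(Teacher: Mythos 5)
The paper does not prove this statement: it is quoted as a known result, attributed to Evertse--Schlickewei--Schmidt \cite{ESS02}, with \cite{AV09} cited for a quantitative improvement. The paper only explains the translation of notation (the $\Gamma$ and $r$ of \cite{ESS02} correspond to $\Gamma^s$ and $rs$ here) and then uses the theorem as a black box. So there is nothing in the paper against which to match your argument line by line; what can be assessed is whether your sketch is a sound account of the ESS02 proof.

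As such, your sketch captures the high-level architecture (reduce to a finitely generated group, divide by $x_s$ to obtain an inhomogeneous unit equation, apply the Subspace Theorem, induct on $s$ over proper subspaces) but has two genuine gaps. First, the step ``Embed $\Gamma$ in $K^\times$ for a number field $K$'' is not available in the generality claimed: $\Gamma$ is an arbitrary finite-rank subgroup of $\C^\times$, and after replacing it by a finitely generated subgroup its generators may still be transcendental, so no such $K$ need exist. The ESS02 proof handles this via a nontrivial specialization argument reducing the complex case to the algebraic one while controlling the rank and the non-degeneracy of the solutions; your proposal omits this entirely, and it is not a cosmetic omission, since the application in this very paper allows $\Omega\subset\C^\times$ with no algebraicity hypothesis. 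Second, and more centrally, Schlickewei's $S$-unit form of the Subspace Theorem produces a bound on the number of exceptional subspaces that depends on $[K:\Q]$ and $\#S$, not merely on $r$ and $s$. Eliminating this dependence so that the final count is a function of $r$ and $s$ alone is the main technical achievement of \cite{ESS02}; it requires the ``absolute'' quantitative Subspace Theorem of Evertse--Schlickewei together with a careful treatment of the small solutions (which the Subspace Theorem does not see at all and which, in naive approaches, contribute a field-dependent term). You correctly flag that the intersection $\Gamma^{s-1}\cap V$ must be reorganized into boundedly many cosets to close the induction, but the two issues above are prior to, and independent of, that difficulty, and without addressing them the argument does not yield the stated conclusion.
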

Note that~$\Gamma$ in \cite{AV09,ESS02} corresponds to our $\Gamma^s$, and~$r$ in \cite{AV09,ESS02} corresponds to our~$rs$.

\bigskip

In addition to this theorem, we will need a simple technical lemma.

{\sloppy

\begin{lemma}
\label{lprop}
Let ${\alpha,\beta,\gamma \in  \C^\times}$   and ${m,n,m',n'\in \Z}$ be such that 
$$
m\ne  n,  \quad m,n\ne 0,\quad
m'\ne n', \quad m',n'\ne 0, \quad(m,n)\ne (m',n'). 
$$
Consider the sets\footnote{Perhaps, it would be more proper to call them \textit{multi-sets}  because some of the listed numbers may accidentally be equal. However, we prefer to say simply ``sets'', hoping that this formal inaccuracy does not produce any confusion. }  
\begin{align*}
S&=\{\alpha^m\beta^n,\alpha^n\beta^m,\alpha^m\gamma^n,\alpha^n\gamma^m,\beta^m\gamma^n,\beta^n\gamma^m\},\\
S'&=\{\alpha^{m'}\beta^{n'},\alpha^{n'}\beta^{m'},\alpha^{m'}\gamma^{n'},\alpha^{n'}\gamma^{m'},\beta^{m'}\gamma^{n'},\beta^{n'}\gamma^{m'}\}.
\end{align*}
To every ${x\in S}$ we associate ${x'\in S'}$ in the obvious way (for instance, for ${x=\alpha^m\beta^n }$ we define ${x'=\alpha^{m'}\beta^{n'}}$). 
\begin{enumerate}
\item
\label{inotwotwotwo}
Assume that~$S$ admits a partition
${S=\{x_1,y_1\}\cup\{x_2,y_2\}\cup\{x_3,y_3\}}$   
into~$3$ two-element sets\footnote{To be precise, a partition of the multi-set~$S$ into~$3$ two-element muti-sets; say, we may have ${x_1=\alpha^m\beta^n}$, ${y_1=\alpha^n\gamma^m}$ or ${x_1=\alpha^m\beta^n}$, ${x_2=\alpha^n\gamma^m}$ even if accidentally ${\alpha^m\beta^n=\alpha^n\gamma^m}$.}  such that the  quotients ${x_1/y_1,x_2/y_2,x_3/y_3}$  are all roots of unity.  Then  one of the numbers ${\alpha/\beta, \alpha/\gamma,\beta/\gamma}$ is a root of unity.

\item
\label{ipartition}

Assume that~$S$ admits a partition ${S=T\cup U}$  into two sets (one of which is allowed to be empty) such that  for any ${x,y\in T}$ we have ${x/x'=y/y'}$, and the same holds true for any two elements of~$U$. Then  one of the numbers ${\alpha/\beta, \alpha/\gamma,\beta/\gamma}$ is a root of unity.

\end{enumerate}
\end{lemma}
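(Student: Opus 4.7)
The six monomials of $S$ split, according to their two-element support in $\{\alpha,\beta,\gamma\}$, into three \emph{support groups}: $G_{\alpha\beta}=\{\alpha^m\beta^n,\alpha^n\beta^m\}$, $G_{\alpha\gamma}=\{\alpha^m\gamma^n,\alpha^n\gamma^m\}$, $G_{\beta\gamma}=\{\beta^m\gamma^n,\beta^n\gamma^m\}$. In both parts the strategy is to extract from the hypothesis a multiplicative relation $(A/B)^N=\zeta$ with $A,B\in\{\alpha,\beta,\gamma\}$ distinct, $N\ne 0$, and $\zeta$ a root of unity, which yields $A/B$ itself as a root of unity.

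For Part~\ref{inotwotwotwo} the plan is to show that any partition of $S$ into three pairs contains at least one pair $\{x,y\}$ whose quotient $x/y$ is a nonzero integer power of one of $\alpha/\beta$, $\alpha/\gamma$, $\beta/\gamma$; since the hypothesis makes this quotient a root of unity and the relevant exponent is nonzero, the conclusion then follows at once. If the partition contains an intra-group pair, that pair is $\{A^m B^n, A^n B^m\}$ and its quotient $(A/B)^{m-n}$ already has this shape (using $m\ne n$). Otherwise every pair crosses two distinct support groups; a simple double count forces each of the three pair-types $\{G_{AB},G_{AC}\}$ to occur exactly once, so there are only $2^3=8$ such cross partitions to examine, and a direct inspection of all eight shows that each contains at least one ``simple'' cross pair of shape $\{A^m B^n, A^m C^n\}$ (quotient $(B/C)^n$) or $\{A^m B^n, B^m C^n\}$ (quotient $(A/C)^m$), with $n\ne 0$ or $m\ne 0$.

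For Part~\ref{ipartition}, write $p=m-m'$ and $q=n-n'$, so $(p,q)\ne (0,0)$. For each $x\in S$ define the \emph{multiplier} $\mu(x):=x/x'$; explicitly $\mu(u^m v^n)=u^p v^q$ and $\mu(u^n v^m)=u^q v^p$. The hypothesis becomes that $\mu$ is constant on $T$ and on $U$; since $|T|+|U|=6$, at least one of them, say $T$, has three or more elements, so three of the six multipliers coincide. If two of these three lie in a common support group $\{A,B\}$, their equality reads $A^p B^q=A^q B^p$, i.e.\ $(A/B)^{p-q}=1$, giving a root of unity when $p\ne q$; the degenerate case $p=q$ collapses the six multipliers to $(AB)^p,(AC)^p,(BC)^p$ with $p\ne 0$, and any coincidence among these three still forces a root of unity. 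If the three coinciding multipliers come from three distinct support groups, there are eight sign patterns to inspect; in six of them one of the two resulting equations takes the simple shape $A^p=B^p$ or $A^q=B^q$, and in the two remaining ``mixed'' patterns an integer combination of the two exponent relations eliminates one variable and produces $(A/B)^{p^2-pq+q^2}=1$. Since $p^2-pq+q^2=(p-q/2)^2+3q^2/4>0$ for $(p,q)\ne (0,0)$, this again delivers a root of unity.

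The hardest step is the closing computation in the ``mixed'' sub-case of Part~\ref{ipartition}: one must take a nonobvious integer combination of two nontrivial multiplier equations, and the positive-definiteness of the binary form $p^2-pq+q^2$ is the arithmetic input that keeps the resulting exponent nonzero. The remaining enumerations (in Part~\ref{inotwotwotwo} and the ``simple'' cases of Part~\ref{ipartition}) are routine but unavoidable.
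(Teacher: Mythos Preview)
Your argument is correct in substance, with one small typo and one easily-filled elision.

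\textbf{The typo (Part~\ref{inotwotwotwo}).} The pair you call $\{A^mB^n,B^mC^n\}$ does \emph{not} have quotient $(A/C)^m$; its quotient is $A^mB^{n-m}C^{-n}$. The ``simple'' cross pair of the second kind should be $\{A^mB^n,\,C^mB^n\}$ (equivalently $\{A^mB^n,\,B^nC^m\}$), whose quotient is indeed $(A/C)^m$. With that correction your enumeration goes through: each of the eight all-cross partitions contains a pair sharing one letter to a common exponent, hence a pair with quotient $(A/B)^m$ or $(A/B)^n$ for some distinct $A,B\in\{\alpha,\beta,\gamma\}$.

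\textbf{The elision (Part~\ref{ipartition}, Case~B, ``simple'' patterns).} You should record that in each of the six simple patterns, among the three pairwise multiplier-equalities one is of the form $(A/B)^p=1$ and another of the form $(C/D)^q=1$; since $(p,q)\ne(0,0)$ at least one exponent is nonzero. This is true in all six cases but deserves one sentence.

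\textbf{Comparison with the paper.} Part~\ref{inotwotwotwo} is essentially the same case analysis as the paper's, organised through your ``support groups''. Part~\ref{ipartition} is where your route genuinely diverges. The paper first disposes of the degenerate cases $p=0$, $q=0$, $p=q$ separately (reducing $p=q$ to $p=0$ by a multiplicative shift), and then in the residual ``mixed'' case it uses \emph{both} blocks $T$ and $U$: dividing the $T$-relations by the $U$-relations gives $(\alpha/\beta)^{p-q}=(\beta/\gamma)^{p-q}=(\gamma/\alpha)^{p-q}$, whence $(\alpha/\beta)^{3(p-q)}=1$. You instead work solely with three coinciding multipliers inside $T$, and in the mixed case eliminate $\gamma$ from the two exponent relations to obtain $(\alpha/\beta)^{p^2-pq+q^2}=1$; the positive-definiteness of $p^2-pq+q^2$ then handles all $(p,q)\ne(0,0)$ uniformly. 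This is a neat alternative: it needs no separate treatment of $p=q$, uses only half of the hypothesis in the hard case, and trades the paper's ``product equals one'' trick for the norm form of $\Z[\omega]$.
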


}

\begin{proof}
In item~\ref{inotwotwotwo} we may assume, without loss of generality, that ${x_1=\alpha^m\beta^n}$. If ${y_1=\alpha^n\beta^m}$ then ${(\alpha/\beta)^{m-n}}$ is a root of unity, and we are done. Hence we may assume that ${x_2=\alpha^n\beta^m}$. If ${y_1=\alpha^m\gamma^n}$ then ${(\beta/\gamma)^n}$ is a root of unity, and  if ${y_1=\beta^n\gamma^m}$ then ${(\alpha/\gamma)^m}$ is a root of unity.  Hence we may assume that ${y_1\in \{\alpha^n\gamma^m,\beta^m\gamma^n\}}$, and, without loss of generality, ${y_1=\alpha^n\gamma^m}$. 

Similarly, we may assume that ${y_2\in \{\alpha^m\gamma^n,\beta^n\gamma^m\}}$. If ${y_2=\alpha^m\gamma^n}$ then ${\{x_3,y_3\}=\{\beta^m\gamma^n,\beta^n\gamma^m\}}$, and ${(\beta/\gamma)^{m-n}}$ is a root of unity. If ${y_2=\beta^n\gamma^m}$ then ${\{x_3,y_3\}=\{\alpha^m\gamma^n,\beta^m\gamma^n\}}$ and ${(\alpha/\beta)^m}$ is a root of unity.   This proves item~\ref{inotwotwotwo}.

In the proof of item~\ref{ipartition} we may assume that ${\#T\ge 3}$. 
Assume first that ${m=m'}$. Then ${n\ne n'}$.  We may assume that ${\alpha^m\beta^n\in T}$. Since ${\#T\ge 3}$, it must contain one of ${\alpha^n\beta^m,\alpha^m\gamma^n,\alpha^n\gamma^m,\beta^m\gamma^n}$. If, for instance, ${\alpha^n\beta^m\in T}$ then ${(\alpha/\beta)^{n-n'}=1}$. The other three cases are settled similarly. This completes the proof in the case ${m=m'}$. 

The case ${n=n'}$ is analogous. Now assume that ${m-m'=n-n'}$. Multiplying the elements of~$S$ and~$S'$ by  ${(\alpha\beta\gamma)^{-n}}$ and $ {(\alpha\beta\gamma)^{-n'}}$, respectively, we reduce this to the case ${m=m'}$, with ${m-n=m'-n'}$ as~$m$ and~$m'$,   with ${-n}$ as~$n$ and with ${-n'}$ as~$n'$. 

From now on
$$
m\ne m', \qquad n\ne n',\qquad m-m'\ne n-n'. 
$$
We may again assume that ${\alpha^m\beta^n\in T}$. If ${\alpha^m\gamma^n\in T}$ then ${(\beta/\gamma)^{n-n'}=1}$,  if ${\beta^n\gamma^m\in T}$ then ${(\alpha/\gamma)^{m-m'}=1}$, and if     ${\alpha^n\beta^m \in T}$ then ${(\alpha/\beta)^{(m-m')-(n-n')}=1}$. 

We are left with the case  
$$
T=\{\alpha^m\beta^n, \beta^m\gamma^n, \gamma^m\alpha^n\}, \qquad U=\{\alpha^n\beta^m, \beta^n\gamma^m, \gamma^n\alpha^m\}. 
$$
In this case we have
$$
\begin{array}{r@{\,}c@{\,}l}
\alpha^{m-m'}\beta^{n-n'}&= \beta^{m-m'}\gamma^{n-n'} &=\gamma^{m-m'}\alpha^{n-n'}, \\
\alpha^{n-n'}\beta^{m-m'}&= \beta^{n-n'}\gamma^{m-m'} &=\gamma^{n-n'}\alpha^{m-m'}.
\end{array} 
$$
Dividing term by term, we obtain
$$
(\alpha/\beta)^{(m-m')-(n-n')}=(\beta/\gamma)^{(m-m')-(n-n')}=(\gamma/\alpha)^{(m-m')-(n-n')}.  
$$
Since the product of the three numbers is~$1$, we obtain 
$$
(\alpha/\beta)^{3((m-m')-(n-n'))}=(\beta/\gamma)^{3((m-m')-(n-n'))}=(\gamma/\alpha)^{3((m-m')-(n-n'))}=1.  
$$
The lemma is proved. 
\end{proof}

Now we are ready to prove Theorem~\ref{thc}. As indicated in the Introduction, the argument is, essentially, due to Evertse et al \cite[Theorem~1]{EGST88}.  

Without loss of generality we may assume that ${\Omega=\{\alpha,\beta,\gamma\}}$ 
  such that neither of ${\alpha/\beta,\alpha/\gamma, \beta/\gamma }$ is a root of unity. Let~$\Gamma$ be the multiplicative group generated by~$\alpha$,~$\beta$,~$\gamma$ and $-1$. For a positive integer~$s$ let $\kappa(s)$ be the number of non-proportional primitive solutions of ${x_1+\cdots+x_s=0}$ in ${x_1, \ldots,x_s\in \Gamma}$. Clearly, 
$$
\kappa(1)=0, \qquad \kappa(2)=1,
$$
and Theorem~\ref{thuneq} implies that $\kappa(s)$ is bounded by an effectively computable quantity depending only on~$s$. We set ${\kappa(0)=1}$. 

Call a pair ${(m,n)\in \Z^2}$ with ${m>n>0}$ \textit{suitable} if there exists a trinomial of the form ${X^m+AX^n+B}$ vanishing at~$\Omega$. Note that the coefficients~$A$ and~$B$ are uniquely  determined in terms of $(m,n)$ using the formulas  
\begin{equation}
\label{eab}
A=-\frac{\alpha^m-\beta^m}{\alpha^n-\beta^n}, \qquad B=-\frac{\alpha^{m-n}-\beta^{m-n}}{\alpha^{-n}-\beta^{-n}}. 
\end{equation}
This means that the Theorem~\ref{thc} will be proved if we show that the number of suitable pairs $(m,n)$ is bounded by an absolute constant. 
We are going to show that this number does not exceed ${10\kappa(3)^2+ 15\kappa(4)+\kappa(6)}$.

Fix a suitable pair $(m,n)$. Since~$\alpha$,~$\beta$ and~$\gamma$ are roots of a trinomial of the form ${X^m+AX^n+B}$, we have 
\begin{equation}
\begin{vmatrix}
\alpha^m&\alpha^n&1\\
\beta^m&\beta^n&1\\
\gamma^m&\gamma^n&1
\end{vmatrix}=0. 
\end{equation}
This can be re-stated as follows: 
$$
(x_1, \ldots, x_6)=(\alpha^m\beta^n,-\alpha^n\beta^m,-\alpha^m\gamma^n,\alpha^n\gamma^m,\beta^m\gamma^n,-\beta^n\gamma^m) 
$$
is a solution of the equation ${x_1+\cdots+x_6=0}$.

This solution is not, in general,  primitive, but
item~\ref{inotwotwotwo} of Lemma~\ref{lprop} implies that there is a partition 
$$
\{1,2,\ldots,6\}=V\cup W
$$
such that the following holds:
\begin{itemize}
\item ${(\#V,\#W)\in \{(3,3),(4,2),(6,0)\}}$;
\item
${\sum_{i\in V}x_i=\sum_{i\in W}x_i=0}$; 
\item
no proper sub-sum of $\sum_{i\in V}x_i$ vanishes, and neither does any proper sub-sum of $\sum_{i\in W}x_i$; in other words, $(x_i)_{i\in V}$ and  $(x_i)_{i\in W}$ are primitive solutions of the corresponding equations. 
\end{itemize}
We will say that $(m,n)$ is a suitable pair of type $(V,W)$. 
(We identify types $(V,W)$ and $(W,V)$ when ${\#V=\#W=3}$.)

Now let $(m',n')$ be another suitable pair of the same type $(V,W)$. Item~\ref{ipartition} of Lemma~\ref{lprop} implies that either $(x_i)_{i\in V}$ and $(x_i')_{i\in V}$ are not proportional, or $(x_i)_{i\in W}$ and $(x_i')_{i\in W}$ are not. Hence there can exist at most ${\kappa(\#V)\kappa(\#W)}$ suitable pairs of a given type ${(V,W)}$.

Since a set of~$6$ elements admits~$10$ partitions of signature $(3,3)$ and~$15$ partitions of signature $(4,2)$, the total number of suitable pairs is bounded by 
$$
10\kappa(3)\kappa(3)+ 15\kappa(4)\kappa(2)+\kappa(6)= 10\kappa(3)^2+15\kappa(4)+\kappa(6). 
$$
The theorem is proved. \qed

\bigskip

Using the explicit bound from \cite[Theorem~6.2]{AV09}, one can produce a ridiculously big explicit value $10^{300000}$ for the constant in Theorem~\ref{thc}. Perhaps, this can improved by using some careful ad hoc arguments. 

\section{Proof of Theorem~\ref{thalg}}
\label{salg}

To start with, let us fix some conventions.

\begin{itemize}
\item
In this section we fix, once and for all, an embedding ${\bar\Q\hookrightarrow\C}$. 

\item

We say that $\log z$ is the \textit{principal value} of the complex logarithm of ${z\in \C}$ if ${-\pi <\Im\log z\le \pi}$. 

\end{itemize}

Our principal tool will be Baker's inequality in the form given by Matveev \cite[Corollary~2.3]{Ma00}, reproduced below.

\begin{theorem}[Matveev]
\label{thmat}
Let ${\theta_1, \ldots, \theta_s}$ be non-zero algebraic numbers belonging to a number field of degree~$d$, and  ${\log\theta_1, \ldots , \log\theta_s}$ some determinations of their complex logarithms.   Let ${b_1, \ldots, b_s\in \Z}$ be such that 
$$
\Lambda=b_1\log\theta_1+\cdots+b_s\log\theta_s\ne 0. 
$$
 Let ${A_1, \ldots, A_s, B}$ be real numbers satisfying 
\begin{align*}
A_k&\ge \max\{d\height(\theta_k),|\log\theta_k|,0.16\}\qquad (k=1,\ldots, s),\\
B&\ge \max\{|b_1|, \ldots, |b_s|\}. 
\end{align*}
Then 
\begin{equation*}
\log\Lambda\ge -2^{6s+20} d^2(1+\log d)A_1\cdots A_s (1+\log B). 
\end{equation*}
\end{theorem}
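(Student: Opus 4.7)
The plan is to follow the classical framework of Baker's theory of linear forms in logarithms, in the refinement due to Matveev. The strategy follows the transcendence-theoretic template: assume toward contradiction that $|\Lambda|$ is strictly smaller than the claimed bound; build an auxiliary holomorphic object vanishing to high order at many points; use complex-analytic extrapolation (Schwarz--Jensen) to bound it from above at further points; invoke a Liouville-type arithmetic lower bound for the same quantity; finally compare the two bounds to obtain a contradiction.

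First I would carry out the standard normalizations. By a Kummer descent, one replaces the system $\theta_1,\ldots,\theta_s$ by an equivalent system enjoying stronger multiplicative-independence properties, at the cost of a controlled enlargement of the field degree; the hypothesis on the $A_k$ allows a rescaling of the $\log\theta_k$ so that they are of moderate absolute value. One may then assume $|\Lambda|$ is far smaller than any trivial quantity of interest (otherwise there is nothing to prove), so that the exponentials $e^{(k_1\log\theta_1+\cdots+k_s\log\theta_s)\zeta}$ are extremely close to the algebraic numbers $\theta_1^{k_1}\cdots\theta_s^{k_s}$ raised to integer powers.

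The technical core is the interpolation-determinant method of Laurent, adapted by Matveev for sharp constants. One assembles a square matrix whose rows are indexed by tuples $(k_1,\ldots,k_s)$ of nonnegative integers in suitable ranges and whose columns are indexed by integer points $\zeta_1,\ldots,\zeta_N$. The entries are products of the shape
$$
(k_1+b_1\zeta_j)^{k_1}\cdots(k_s+b_s\zeta_j)^{k_s}\,\theta_1^{k_1\zeta_j}\cdots\theta_s^{k_s\zeta_j}.
$$
An analytic upper bound for the absolute value of the determinant comes from viewing it as a holomorphic function of a complex variable substituted for the $\zeta_j$, estimating its order of vanishing at the origin by a combinatorial count, and applying Schwarz's lemma on a disc of well-chosen radius. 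A matching arithmetic lower bound comes from the observation that the determinant, after substituting the approximations coming from the smallness of $|\Lambda|$, is close to a nonzero algebraic number of controlled degree and of height bounded in terms of the $A_k$; Liouville's inequality then forbids it to be too small. Comparing the two bounds and optimizing the ranges of the $k_i$ and the size $N$ of the matrix produces the advertised inequality.

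The main obstacle, and the point at which Matveev's paper improves over Baker--Wüstholz and its predecessors, is obtaining the specific shape $2^{6s+20}$ — single exponential in $s$ — together with a clean symmetric product $A_1\cdots A_s$, rather than $s!\,A_1\cdots A_s$ or an even worse combinatorial factor. Attaining this requires a delicate balance between the number of rows and the number of columns of the auxiliary matrix, symmetric interpolation data, and careful treatment of the degenerate case of Kummer descent (when the multiplicative group generated by the $\theta_k$ has unexpectedly small rank), which must be handled by a separate analytic argument. I would expect the write-up to consist of many pages of meticulous bookkeeping of constants rather than to turn on any single decisive new insight, which is why results of this strength are cited as black boxes throughout the Diophantine literature.
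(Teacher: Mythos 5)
The paper does not prove this statement: it is quoted verbatim (as Theorem~\ref{thmat}) from Matveev's paper, cited as~\cite[Corollary~2.3]{Ma00}, and used throughout Section~\ref{salg} strictly as a black box. There is therefore no in-paper argument against which to compare your proposal. What you have written is a reasonable bird's-eye sketch of how Matveev actually proves such a bound — Kummer descent, Laurent's interpolation-determinant method, a Schwarz-type analytic upper bound, and a Liouville-type arithmetic lower bound, with the constant $2^{6s+20}$ arising from the careful choice of parameters and from handling degenerate ranks separately — and that outline is consistent with the literature. But as written it is a roadmap, not a proof: every one of the steps you list (the exact form of the auxiliary determinant, the zero estimate, the extrapolation radii, the Kummer-descent reduction, the degenerate case) is where the real mathematical content lives, and none is carried out. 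For the purposes of this paper that is fine, since the authors themselves treat the result as known; if you actually need the inequality, the correct move is the one the paper makes — cite Matveev — rather than attempting to reprove it.
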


Here is a useful consequence.

\begin{corollary}
\label{cmat}
Let~$\alpha$ and~$\beta$ be non-zero algebraic numbers contained in a number field of degree~$d$, and~$k$ a positive integer. Assume that ${|\alpha|\ge |\beta|}$ and that ${\alpha^k\ne\beta^k}$. Then 
\begin{align}
\label{e>}
|\alpha^k-\beta^k|&\ge |\alpha|^k e^{-d^2(\height(\alpha/\beta)+1)}&& \text{if $|\alpha|>|\beta|$},\\
\label{e=}
|\alpha^k-\beta^k|&\ge |\alpha|^k e^{-10^{12}d^4(\height(\alpha/\beta)+1)\log (k+1)}&& \text{if $|\alpha|=|\beta|$}.
\end{align}
\end{corollary}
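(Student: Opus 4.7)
\bigskip\noindent\textbf{Proof proposal.}
The plan is to factor $|\alpha^k-\beta^k|=|\alpha|^k|1-\eta^k|$ where $\eta=\beta/\alpha$, so $|\eta|\le 1$, and to produce a lower bound for $|1-\eta^k|$ in each of the two cases. Note that $\eta^k\ne 1$ since $\alpha^k\ne\beta^k$, and that $\eta$ lies in the degree-$d$ field containing $\alpha,\beta$, and $h(\eta)=h(\alpha/\beta)$.

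For \eqref{e>} (the case $|\alpha|>|\beta|$) I would use the trivial inequality
$|1-\eta^k|\ge 1-|\eta|^k\ge 1-|\eta|$
(the second step because $0\le|\eta|<1$ and $k\ge1$), and then apply Proposition~\ref{prmodnoone} to $\theta=\eta$; since $|\eta|\ne1$ and $\eta$ has degree at most $d$, this gives $1-|\eta|\ge e^{-d^2(h(\eta)+\log 2)}\ge e^{-d^2(h(\alpha/\beta)+1)}$. This case is straightforward and produces no difficulty.

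For \eqref{e=} (the case $|\alpha|=|\beta|$, where Proposition~\ref{prmodnoone} is unavailable) I would set $\log\eta=i\varphi$ with $\varphi\in(-\pi,\pi]$ (the principal value, which is purely imaginary since $|\eta|=1$), choose the integer $\ell$ nearest to $k\varphi/(2\pi)$ so that $|k\varphi-2\pi\ell|\le\pi$ and $|\ell|\le (k+1)/2$, and use the elementary estimate
$|1-\eta^k|=2\bigl|\sin\tfrac{k\varphi-2\pi\ell}{2}\bigr|\ge \tfrac{2}{\pi}|k\varphi-2\pi\ell|.$
Writing $\Lambda=k\log\eta-2\ell\log(-1)$ (where $\log(-1)=i\pi$), we then have $|\Lambda|=|k\varphi-2\pi\ell|$, and $\Lambda\ne 0$ exactly because $\eta^k\ne 1$. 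At this point I would invoke Matveev's Theorem~\ref{thmat} with $s=2$, $\theta_1=\eta$, $\theta_2=-1$, $b_1=k$, $b_2=-2\ell$, taking $A_2=\pi$, $A_1=\max\{dh(\eta),\pi\}\le \pi\,d(h(\alpha/\beta)+1)$, and $B=k+1$.

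The main obstacle is bookkeeping of constants rather than any serious obstruction. Matveev's bound takes the shape
$\log|\Lambda|\ge -2^{32}d^2(1+\log d)\,A_1 A_2\,(1+\log(k+1)),$
and one has to absorb the factors $(1+\log d)\le d$, the $\pi$'s, the conversion factor $2/\pi$ from the sine estimate, and the mild inequality $1+\log(k+1)\le 3\log(k+1)$ (which is needed to turn $1+\log(k+1)$ into $\log(k+1)$, with the $k=1$ case being the tightest) into the clean constant $10^{12}d^4(h(\alpha/\beta)+1)\log(k+1)$ advertised in~\eqref{e=}. A direct numerical check ($2^{32}\cdot\pi^2\cdot 3\approx 1.3\cdot 10^{11}$) shows that the stated constant $10^{12}$ is comfortably sufficient, completing the proof of the corollary.
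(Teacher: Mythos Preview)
Your proof is correct and follows essentially the same route as the paper's own argument: factor out $|\alpha|^k$, handle the case $|\eta|<1$ via Proposition~\ref{prmodnoone}, and in the case $|\eta|=1$ reduce $|1-\eta^k|$ to a linear form $\Lambda=k\log\eta-2\ell\log(-1)$ and apply Matveev's theorem with the same parameters $s=2$, $A_1=\pi d(\height(\eta)+1)$, $A_2=\pi$, $B=k+1$. The only cosmetic difference is that you write the elementary bound via $2|\sin(\cdot)|$ whereas the paper states $|1-e^{ui}|\ge(2/\pi)|u|$ directly; the constant bookkeeping is identical.
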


\begin{proof}
Set ${\theta=\beta/\alpha}$. 
If ${|\alpha|>|\beta|}$ then we apply Proposition~\ref{prmodnoone}:
$$
|\alpha^k-\beta^k|\ge |\alpha|^k(1-|\theta|^k)\ge |\alpha|^k(1-|\theta|)\ge |\alpha|^ke^{-d^2(\height(\theta)+\log 2)}. 
$$
Now assume that ${|\alpha|=|\beta|}$. In this case we use the equality
$$
|\alpha^k-\beta^k|= |\alpha|^k|1-\theta^k|
$$
We have ${|\theta|=1}$, and we let ${\log\theta}$ be the principal value of the logarithm; that is ${\log\theta=\lambda i}$ with ${-\pi<\lambda\le \pi}$. We set~$\ell$ to be the nearest integer to ${k\lambda/(2\pi)}$, and we define 
$$
\Lambda=k\log\theta-2\pi i \ell. 
$$
Note that ${|\ell|\le (k+1)/2}$, that $\Lambda$ is the principal value of $\log (\theta^k)$,  and that  ${\Lambda\ne 0}$ (because ${\alpha^k\ne \beta^k}$). 

If ${u\in [-\pi,\pi]}$ then ${|1-e^{ui}|\ge (2/\pi)|u|}$. Hence 
$$
|1-\theta^k|\ge (2/\pi)|\Lambda|.
$$
To estimate $|\Lambda|$ we use Theorem~\ref{thmat} with the following parameters: 
\begin{align*}
&s=2, \quad \theta_1=\theta, \quad \theta_2=-1, \quad b_1=k, \quad b_2=-2\ell, \\ 
&A_1=\pi d(\height(\theta)+1), \quad A_2=\pi, \quad B=k+1. 
\end{align*}
We obtain, after easy calculations, the estimate
$$
|\Lambda|\ge e^{-10^{11}d^4(\height(\theta)+1)\log (k+1)}.
$$
This proves~\eqref{e=}. 
\end{proof}

We will also need a simple, but crucial lemma.

\begin{lemma}
\label{labsval}
Let ${\alpha,\beta,\gamma\in \C^\times}$ be three roots of a trinomial with complex coefficients. Assume that ${|\alpha|=|\beta|=|\gamma|}$. Then one of the quotients ${\alpha/\beta,\alpha/\gamma,\beta/\gamma}$ is a root of unity.
\end{lemma}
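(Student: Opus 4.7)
The plan is to exploit the trinomial equation $\alpha^m + A\alpha^n + B = 0$ by isolating $\alpha^m = -A\alpha^n - B$ and taking squared absolute values. Writing ${r=|\alpha|=|\beta|=|\gamma|}$, this identity yields
$$
r^{2m} = |A\alpha^n+B|^2 = |A|^2r^{2n}+|B|^2+2\Re(A\bar B\alpha^n),
$$
so
$$
\Re(A\bar B\alpha^n)=\frac{r^{2m}-|A|^2r^{2n}-|B|^2}{2}.
$$
The same computation applies to $\beta$ and $\gamma$, so the three quantities $A\bar B\alpha^n$, $A\bar B\beta^n$, $A\bar B\gamma^n$ all share the same real part.

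First I would dispose of the degenerate case $A=0$: then the trinomial is ${X^m+B}$, hence ${\alpha^m=\beta^m=\gamma^m=-B}$, and any of the three quotients is an $m$-th root of unity. Assume from now on that ${A\ne 0}$ (recall $B\ne 0$ by our convention on trinomials). Then ${|A\bar B|r^n>0}$, so the three numbers $A\bar B\alpha^n$, $A\bar B\beta^n$, $A\bar B\gamma^n$ lie on a common circle of positive radius about $0$, and by the displayed identity they lie on a common vertical line in $\C$.

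A vertical line meets a circle in at most two points, so at least two of these three complex numbers coincide; say $A\bar B\alpha^n=A\bar B\beta^n$. Dividing through by $A\bar B$ gives $\alpha^n=\beta^n$, so $\alpha/\beta$ is an $n$-th root of unity, and we are done (the other two cases are symmetric). There is no real obstacle here: everything reduces to the elementary geometric observation that a line and a circle meet in at most two points, once the squared-modulus trick has converted the trinomial relation into a statement about $\Re(A\bar B\alpha^n)$.
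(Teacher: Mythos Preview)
Your proof is correct and follows essentially the same geometric idea as the paper: the trinomial relation together with the equal-modulus assumption forces three points into the intersection of a circle with another curve, which has at most two elements, so two powers coincide. The only cosmetic difference is that the paper works directly with $\alpha^m,\beta^m,\gamma^m$ lying on the intersection of the two circles $|z|=R^m$ and $|z+B|=|A|R^n$, whereas you expand $|A\alpha^n+B|^2$ to obtain a real-part condition and intersect a circle with a vertical line (handling ${A=0}$ separately and concluding with $n$-th rather than $m$-th roots of unity).
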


\begin{proof}
Denote 
${R=|\alpha|=|\beta|=|\gamma|}$ and write our trinomial as ${X^m+AX^n+B}$ with ${B\ne 0}$. 
Consider the circles 
$$
\calC_1=\{R^me^{ui}:u\in [0,2\pi]\}, \qquad \calC_2=\{-B-AR^ne^{ui}:u\in [0,2\pi]\}.
$$
In other words,~$\calC_1$ is centered at~$0$ and has radius $R^m$ and~$\calC_2$  is centered at~$-B$ and has radius $|A|R^n$. The three numbers ${\alpha^m,\beta^m,\gamma^m}$ belong to~$\calC_1$, and the three numbers ${-B-A\alpha^n}$, etc. belong to~$\calC_2$. Since ${\alpha^m=-B-A\alpha^n}$, etc., the three numbers 
${\alpha^m,\beta^m,\gamma^m}$ belong to the intersection ${\calC_1\cap\calC_2}$, which may consist of 
two elements at most. Hence two of the numbers ${\alpha^m,\beta^m,\gamma^m}$ are equal, and this proves the lemma.
\end{proof}

Now we are ready to prove Theorem~\ref{thalg}. As we will see, a more natural parameter for our estimates is not $\height(\Omega)$, but the quantity 
$$
\tilheight(\Omega)=\{\max\height(\alpha/\beta): \alpha, \beta \in \Omega\}.
$$
Clearly, 
$
{\tilheight(\Omega) \le 2\height(\Omega)} 
$.

As in Section~\ref{sc} we may assume that ${\Omega=\{\alpha,\beta,\gamma\}}$, where none of the quotients ${\alpha/\beta,\alpha/\gamma,\beta/\gamma}$ is a root of unity. Lemma~\ref{labsval} implies that ${\alpha,\beta,\gamma}$ are not all three of the same absolute value, so we may assume that either ${|\alpha|>|\beta|\ge|\gamma|}$ or ${|\alpha|<|\beta|\le|\gamma|}$. If ${\alpha,\beta,\gamma}$ are roots of a trinomial, then ${\alpha^{-1},\beta^{-1},\gamma^{-1}}$ are roots of a trinomial  of the same degree and height. Hence we may assume that 
$$
|\alpha|>|\beta|\ge |\gamma|. 
$$

Let ${X^m+AX^n+B}$ be a trinomial vanishing at $\alpha,\beta,\gamma$. Recall that, as usual, ${m>n>0}$ and ${B\ne 0}$. We are going to estimate ${m-n}$ in terms of~$n$, and afterwards ${n}$ in terms of ${m-n}$. The two estimates will yield the desired conclusion.

\subsection{Estimating ${m-n}$ in terms of~$n$} 
We have
$$
A=-\frac{\alpha^m-\beta^m}{\alpha^n-\beta^n}=-\frac{\beta^m-\gamma^m}{\beta^n-\gamma^n},
$$
Using Corollary~\ref{cmat}, this implies the following lower and upper bounds for $|A|$:
\begin{align*}
|A|&\ge\frac{|\alpha^m-\beta^m|}{2|\alpha|^n}\ge\frac12|\alpha|^{m-n} e^{-d^2(\tilheight+1)},\\
|A|&\le\frac{2|\beta|^m}{|\beta^n-\gamma^n|}\le2|\beta|^{m-n} e^{10^{12}d^4(\tilheight+1)\log (n+1)}
\end{align*}
where we abbreviate ${\tilheight=\tilheight(\Omega)}$. This implies that 
\begin{equation*}
m-n\le \frac{10^{13}d^4(\tilheight+1)}{\log|\alpha/\beta|}\log(n+1). 
\end{equation*}
Proposition~\ref{prmodnoone} implies that  
\begin{equation}
\label{elowerlog}
\log |\alpha/\beta|\ge \frac12e^{-d^2(\tilheight+1)}, 
\end{equation}
and we obtain 
\begin{equation}
\label{emnn}
m-n\le 10^{16}e^{2d^2(\tilheight+1)}\log(n+1). 
\end{equation}
This is the promised estimate of ${m-n}$ in terms of~$n$.

\subsection{Estimating ${n}$ in terms of ${m-n}$} 

Write
$$
a=\alpha^{m-n}, \quad b=\beta^{m-n}, \quad c=\gamma^{m-n}. 
$$
Then 
$$
0=
\begin{vmatrix}
a\alpha^n&\alpha^n&1\\
b\beta^n&\beta^n&1\\
c\gamma^n&\gamma^n&1
\end{vmatrix}=
(a-b)(\alpha\beta)^n+ (b-c)(\beta\gamma)^n-(a-c)(\alpha\gamma)^n.
$$
Dividing by ${(a-c)(\alpha\gamma)^n}$, we obtain
$$
\bigl|\vartheta (\beta/\gamma)^n-1\bigr|= |\eta||\alpha/\beta|^{-n},
$$
where 
$$
\vartheta= \frac{a-b}{a-c}=\frac{1-(\beta/\alpha)^{m-n}}{1-(\gamma/\alpha)^{m-n}}, \qquad \eta=\frac{b-c}{a-c}=\frac{1-(\beta/\gamma)^{m-n}}{1-(\alpha/\gamma)^{m-n}}.
$$
We have clearly 
\begin{equation}
\label{ehtheta}
\height(\vartheta), \height(\eta) \le 2\tilheight(m-n)+2\log2. 
\end{equation}
In particular,
\begin{equation}
\label{eabseta}
|\eta| \le e^{2d(\tilheight(m-n)+1)}.
\end{equation}
If ${|\eta||\alpha/\beta|^{-n}\ge1/2}$ then
\begin{equation}
\label{estronger}
n \le \frac{\log(2|\eta|)}{\log|\alpha/\beta|} \le 10e^{2d^2(\tilheight+1)}(m-n),
\end{equation}
where we use~\eqref{elowerlog} to estimate $\log|\alpha/\beta|$ from below.

From now on we assume that 
\begin{equation}
\label{enearone}
\bigl|\vartheta (\beta/\gamma)^n-1\bigr|= |\eta||\alpha/\beta|^{-n} \le 1/2. 
\end{equation}
Let
$$
\log\vartheta=\log|\vartheta|+i\lambda, \qquad \log(\beta/\gamma)=\log|\beta/\gamma|+i\mu
$$
be the principal values of the complex logarithm,  that is, 
${-\pi <\lambda,\mu\le \pi}$. 
Let~$\ell$ be the nearest integer to ${(\lambda+n\mu)/(2\pi)}$. Clearly, ${|\ell|\le (n+2)/2}$.  Define
$$
\Lambda = \log\vartheta+n\log (\beta/\gamma)-2\pi i \ell. 
$$
Then~$\Lambda$ is the principal value of $\log(\vartheta (\beta/\gamma)^n)$, and we have ${\Lambda\ne 0}$, because ${\vartheta (\beta/\gamma)^n\ne 1}$. 

If~$z$ is a complex number satisfying ${|z-1|\le 1/2}$ and ${\log z}$ is the principal value of its logarithm then ${|\log z|\le 2|z-1|}$. Hence~\eqref{enearone} implies  the upper bound
\begin{equation}
\label{euplam}
|\Lambda|\le 2|\eta||\alpha/\beta|^{-n}. 
\end{equation}
To estimate $|\Lambda|$ from below, we use Theorem~\ref{thmat} with the following parameters:
\begin{align*}
&s=3, \quad \theta_1=\vartheta, \quad \theta_2=\beta/\gamma, \quad \theta_3=-1, \quad
b_1=1, \quad  b_2=n, \quad b_3=-2\ell, \\ 
&A_1=\pi d(m-n)(\tilheight+1), \quad A_2=\pi d(\tilheight+1), \quad A_3=\pi, \quad B=n+2. 
\end{align*}
(Note that ${\height(\vartheta)\le 2\tilheight(m-n)+2}$ by~\eqref{ehtheta}, which means that our choice of~$A_1$ fits the hypothesis of Theorem~\ref{thmat}.) 
We obtain
$$
|\Lambda|\ge e^{-10^{16}d^5(\tilheight+1)^2(m-n)\log(n+1)}
$$
Comparing this with the upper bound~\eqref{euplam} and taking into account the estimates~\eqref{elowerlog},~\eqref{eabseta}, we obtain
\begin{align}
n&\le \frac{10^{16}d^5(\tilheight+1)^2(m-n)\log(n+1)+\log(2|\eta|)}{\log|\alpha/\beta|}\nonumber\\
\label{eweaker}
&\le 10^{20}e^{2d^2(\tilheight+1)}(m-n)\log(n+1). 
\end{align}
Thus, we have either~\eqref{estronger} or~\eqref{eweaker}. Since the latter is formally weaker than the former, we always have~\eqref{eweaker}.

\subsection{Conclusion}

Substituting~\eqref{emnn} into~\eqref{eweaker}, we obtain 
$$
n\le 10^{36}e^{4d^2(\tilheight+1)}(\log(n+1))^2. 
$$
This implies that 
${n\le 10^{50}e^{5d^2(\tilheight+1)}}$. 
Substituting this into~\eqref{emnn}, we deduce that 
${m-n\le 10^{30}e^{3d^2(\tilheight+1)}}$. 
Hence 
${m \le 10^{60}e^{5d^2(\tilheight+1)}}$. 
Since ${\tilheight\le 2\height(\Omega)}$, this proves that 
$$
m\le 10^{60}e^{10d^2(\height(\Omega)+1)},
$$
which is the wanted bound for the degree of the trinomial. Finally, from~\eqref{eab} we deduce that 
$$
\height(A),\height(B) \le 10^{65}e^{10d^2(\height(\Omega)+1)},
$$
which implies the wanted bound for the height of the trinomial. Theorem~\ref{thalg} is proved.

{\footnotesize
\paragraph{Acknowledgments}
We thank Philipp Habegger for an encouraging discussion. We also thank the anonymous referee for pointing out several inaccuracies and for many other useful suggestions that helped us to improve the presentation. 

\bibliographystyle{amsplain}

\bibliography{bib}

}

\end{document}